\numberwithin{equation}{section}
\theoremstyle{plain}
\newtheorem{Th}{Theorem}[section]
\newtheorem{Cor}[Th]{Corollary}
\newtheorem{Prop}[Th]{Proposition}
\theoremstyle{definition}
\newtheorem{Def}[Th]{Definition}
\newtheorem{Rem}[Th]{Remark}
\newtheorem{?}[Th]{Problem}
\def\w{\wedge}
\def\R{\mathbb{R}}
\def\C{\mathbb{C}}
\def\om{\omega}
\def\Om{\Omega}
\def\vp{\varphi}
\def\ip{\raise1pt\hbox{\large$\lrcorner$}\>}
\DeclareMathOperator{\vol}{vol}
\begin{document}
	
\title{Deformed $G_2$-instantons on $\mathbb{R}^4 \times S^3$}

\author[U. Fowdar]{Udhav Fowdar}
	
\address{IMECC - Unicamp \\ 
Rua S\'ergio Buarque de Holanda, 651\\
13083-859\\
Campinas - SP\\
Brazil}
	
\email{udhav@unicamp.br}
		
\keywords{Exceptional holonomy, Instantons} 
\subjclass[2010]{MSC 53C07, 53C29}	
	
\begin{abstract} 
We construct explicit examples of deformed $G_2$-instantons, also called Donaldson-Thomas connections, on $\mathbb{R}^4 \times S^3$ endowed with the torsion free $G_2$-structure found by Brandhuber et al. and on $\mathbb{R}^+\times S^3 \times S^3$ endowed with the Bryant-Salamon conical $G_2$-structure. These are the first such non-trivial examples on a $G_2$ manifold. As a by-product of our investigation we also find an associative foliation of $\mathbb{R}^4\times S^3$ by $\mathbb{R}^2 \times S^1$.
\end{abstract}

\maketitle
\tableofcontents
\section{Introduction}
A $G_2$-structure on a $7$-manifold $M$ is determined by a $3$-form $\vp$ 
which can be identified at each point with the model $3$-form 
\[\vp_0=dx_{123}+dx_{145}+dx_{167}+dx_{246}-dx_{257}-dx_{347}-dx_{356}\]
on $\R^7$. Such $\vp$ determines both a Riemannian metric $g_\vp$ and an orientation $\vol_\vp$ on $M$, and hence a Hodge star operator $*_\vp$. We call $(M, g_\vp,\vp)$ a \textit{torsion free} $G_2$ manifold if $\nabla^{g_\vp} \vp=0$, or equivalently, $d \vp =0=d*_\vp\vp$ cf. \cite{KarigiannisLeungLotay2020}. 

In many ways the geometry of $G_2$ manifolds is similar to that of Calabi-Yau $3$-folds and this has led to many conjectures originating from mirror symmetry for Calabi-Yau $3$-folds to be formulated for $G_2$ manifolds as well. 
Based on ideas originating from the SYZ conjecture, in \cite{LeeLeung2009} the authors introduced the following notion of deformed $G_2$-instantons: 
\begin{Def}
A connection $1$-form $A$ on $(M,g_\vp,\vp)$ is called a $G_2$-instanton if its curvature $2$-form $F_A:=dA+\frac{1}{2}[A\w A]$ satisfies
\begin{equation}
	F_A \w *_\vp\vp =0. \label{g2instantonequ}
\end{equation}
If instead, $F_A$ satisfies
\begin{equation}
	\frac{1}{6}F_A^3-F_A \w *_\vp\vp =0,\label{dG2instantonequ}
\end{equation}
then $A$ is called a deformed $G_2$-instanton, sometimes also called deformed Donaldson-Thomas connection cf. \cite{Karigiannis2009, LeeLeung2009, Lotay2020}.
\end{Def}
While the definition of $G_2$-instantons arises naturally as a higher dimensional analogue of anti-self-dual instantons on $4$-manifolds \cite{Carrion1998}, the motivation behind the definition of deformed $G_2$-instantons comes from the fact that they arise as mirrors, via a real Fourier-Mukai transform, to certain (co)associative cycles cf.  \cite{LeeLeung2009, Leung2000}. These connections can be considered as the $G_2$ analogues of Hermitian Yang-Mills and deformed Hermitian Yang-Mills connections on Calabi-Yau $3$-folds which, by contrast to their $G_2$ counterparts, have been studied extensively cf. \cite{Chen2021, CollinsXieYau2018, Leung2000, Marino2000}.

Just like $G_2$-instantons correspond to critical points of the Yang-Mills functional, it was shown in \cite[Theorem 5.13]{Karigiannis2009} that deformed $G_2$-instantons arise as critical points of a natural Chern-Simons type functional, see (\ref{chernsimonsfunctional}) below. 
Motivated by the work in \cite{LeeLeung2009} one is mainly interested in finding solutions to (\ref{dG2instantonequ}) in the case when $A$ is a unitary connection on a complex line bundle. So in this article we shall only be concerned with the case when the gauge group $G=U(1)$ so that $F_A=dA$. Furthermore, we shall identify the Lie algebra $\mathfrak{u}(1)$ with $\R$ and view $F_A$ as a \textit{real} $2$-form defining an integral cohomology class in $H^2(M^7,\mathbb{Z})$. 

The existence of the first non-trivial examples of deformed $G_2$-instantons was demonstrated in \cite{Lotay2020} on certain \textit{nearly parallel} $G_2$ manifolds i.e. when the $3$-form $\vp$ satisfies  $d\vp=*_\vp\vp$. In this article we construct the first non-trivial solutions to (\ref{dG2instantonequ}) on a \textit{torsion free} $G_2$ manifold, see Theorem \ref{maintheorem} and \ref{maintheoremtwo}. Here `non-trivial' means that the connections are neither flat nor arise as pullback from lower dimensional constructions. 

The manifold that we are interested in is $\R^4 \times S^3$ which is known to admit a $1$-parameter family of $G_2$ metrics cf. \cite{Bazaikin2013, Foscolo2018}. In this family there are two examples which are explicitly known, namely the Bryant-Salamon one \cite{Bryant1989} and the Brandhuber-Gomis-Gubser-Gukov (BGGG) one \cite{Brandhuber01}. We shall construct explicit $1$-parameter families of deformed $G_2$-instantons for the BGGG $G_2$-structure and the conical Bryant-Salamon $G_2$-structure.
Note that since $H^2(S^3,\mathbb{Z})=\{0\}$ the instantons that we construct will be on the trivial line bundle. Due to the close relation between deformed $G_2$-instantons and calibrated submanifolds we are also led to consider the latter in $\R^4  \times S^3$. This leads us to find an associative foliation of $\R^4\times S^3$ by $\R^2 \times S^1$, see Section \ref{calibratedsubmanifolds}. We hope that the results of this paper provide motivation to initiate a search for deformed $G_2$-instantons on \textit{compact} $G_2$ manifolds. \\

\noindent\textbf{Acknowledgements.}
The author would like to thank Jason Lotay for bringing the question addressed in this article to his attention, and also Daniel Fadel for several helpful comments. 
This work was supported by the S\~ao Paulo Research Foundation (FAPESP) [2021/07249-0].

\section{The $SU(2)^2\times U(1)$-invariant $G_2$-structures}\label{theg2manifoldsection}

We begin by describing the explicitly known $G_2$-structures on $\R^4 \times S^3$ for which we shall construct deformed $G_2$-instantons. The examples that we are interested in all admit a cohomogeneity one action of $SU(2)^2\times U(1)$.  
To ease the comparison of our deformed $G_2$-instantons with the $G_2$-instantons constructed in \cite{Lotay2016} we shall adopt the same notation as therein.

First consider the Lie algebra $\mathfrak{su}_2$ with its standard basis 
\[T_1= 
\begin{pmatrix}
	i & 0\\
	0&-i
\end{pmatrix}, \ \
T_2= 
\begin{pmatrix}
	0 & 1\\
	-1&0
\end{pmatrix}, \ \
T_3= 
\begin{pmatrix}
	0 & i\\
	i&0
\end{pmatrix},\]
such that $[T_i,T_j]=2 \varepsilon_{ijk}T_k$. Denoting by $\sigma_i$ the dual basis, we have that 
$$ d\sigma_i=-\varepsilon_{ijk}\sigma_j \w \sigma_k.$$ 
By identifying the tangent spaces of $S^3 \times S^3$ with
$\mathfrak{su}_2 \oplus \mathfrak{su}_2$ we can define a basis of the tangent bundle by $T^+_i=(T_i, T_i)$ and $T^-_i=(T_i,-T_i)$. The vectors fields $\{T^+_i\}_{i=1}^3$ define a Lie subalgebra and as such correspond to the tangent space of a diagonal $S^3$ embedded in $S^3 \times S^3$. If we now denote by $\eta_{i}^{\pm}$ the corresponding dual $1$-forms, then the Maurer-Cartan relations give
\begin{align*}
	d \eta^+_i &= -\varepsilon_{ijk} (\eta^{++}_{jk} + \eta_{jk}^{--}),\\
	d \eta^-_i &= -2\varepsilon_{ijk} \eta^{-+}_{jk},
\end{align*}
where we follow the convention that adjacent $1$-forms are wedged together. Equipped with these forms we can define $1$-parameter families of $SU(2)^2\times U(1)$ invariant $SU(3)$-structures on $S^3 \times S^3$ by
\begin{gather}
	h = (2A_1)^2 (\eta_{1}^+ \otimes  \eta_{1}^+) + (2A_2)^2( \eta_{2}^+ \otimes  \eta_{2}^+ + \eta_{3}^+ \otimes  \eta_{3}^+)\ +\label{anstazmetric}\\
	\ \ \ \ (2B_1)^2 (\eta_{1}^- \otimes \eta_{1}^-) + (2B_2)^2 (\eta_{2}^- \otimes \eta_{2}^- + \eta_{3}^- \otimes \eta_{3}^-),\nonumber\\[2pt]
	\om = 4 A_1 B_1 (\eta_{11}^{-+}) + 4 A_2 B_2 (\eta_{22}^{-+} + \eta_{33}^{-+}),\label{anstazform}\\[2pt]
	\Om^+ = 8 B_1 B_2^2 \eta_{123}^{---} - 8 A_2^2 B_1 \eta_{123}^{-++}- 8 A_1 A_2 B_2 (\eta_{123}^{++-} + \eta_{123}^{+-+}),\label{anstazomplus}\\[2pt]
	\Om^- = 8A_1 B_2^2 \eta_{123}^{+--} -8A^2_2 A_1 \eta_{123}^{+++} + 8 B_1 B_2 A_2 (\eta_{123}^{--+} +  \eta_{123}^{-+-}),\label{anstazomminus}
\end{gather}
where $A_i,B_i : \R^+_t \to \R$ for $i=1,2.$ 
\begin{Rem}
Note that the $\{1\} \times U(1) \subset SU(2)^2 \times U(1)$ action is generated by the vector field $T^+_1$. This can be seen in (\ref{anstazmetric}) from the fact that coefficient functions of $\eta_{2}^{\pm}$  and $\eta_{3}^{\pm}$ are equal. In the Bryant-Salamon case there is in fact an $SU(2)^3$ symmetry since all three vector fields $T^+_i$ are Killing as we shall see next.
\end{Rem}
We can now define an $SU(2)^2\times U(1)$-invariant $G_2$-structure on $\R^+_s \times S^3 \times S^3$ by the $3$-form
\begin{equation}
\vp=dt \w \om(t) + \Om^+(t),\label{the3form}
\end{equation}
which in turn defines the data
\begin{gather}
g_{\vp}=dt^2 + h(t),\\
*_\vp\vp= \frac{1}{2} \om(t) \w \om(t) - dt \w \Om^-(t).
\end{gather}

The Bryant-Salamon $G_2$-structure constructed in \cite{Bryant1989} is obtained as follows. We define a new radial coordinate $r \in [c,\infty)$ by $$t=\int_{c}^r\frac{1}{\sqrt{1-c^3x^{-3}}}dx,$$ 
and set
\begin{equation*}
	A_i=\frac{r}{3}\sqrt{1-c^3r^{-3}},\ \ \
	B_i=\frac{r}{\sqrt{3}},
\end{equation*}
where $c$ is a non-negative constant. If $c=0$ then we get a conical $G_2$-structure and if $c \neq 0$ then without loss of generality we can set $c=1$ to get a complete metric on $\R^4 \times S^3$. Observe that as $r$ tends to infinity, all the coefficient functions $A_i$ and $B_i$ grow linearly in $r$ as the cone. As such the complete Bryant-Salamon metric is said to be of type AC i.e. asymptotically conical. 

The BGGG $G_2$-structure constructed in \cite{Brandhuber01} is obtained as follows. We define a new radial coordinate $r \in [9/4,\infty)$ by $$t=\int_{9/4}^r\frac{\sqrt{(x-3/4)(x+3/4)}}{(x-9/4)(x+9/4)}dx,$$ 
and set
\begin{align*}
	A_1=\frac{\sqrt{(r-9/4)(r+9/4)}}{\sqrt{(r-3/4)(r+3/4)}},\ \
	A_2=\sqrt{\frac{(r-9/4)(r+3/4)}{3}},\\
	B_1=\frac{2r}{3},\ \
	B_2=\sqrt{\frac{(r-9/4)(r+3/4)}{3}}.
\end{align*}
Observe that as $r$ tends to infinity, $A_1$ tends to $1$ while the other coefficients grow linearly in $r$. Such a $G_2$ metric is said to be of type ALC (asymptotically locally conical) i.e. it is asymptotic to a metric on an $S^1$ bundle over a $6$ dimensional Calabi-Yau cone over a Sasaki-Einstein $5$-manifold. In this case the latter $5$-manifold is just $S^2 \times S^3$. 
On the other hand as $r \to 9/4$, $A_i$ tend to $0$ and hence the diagonal $S^3$ fibres collapse just in the Bryant-Salamon case. The metric $g_\vp$ on $\R_s \times S^3 \times S^3$ then extends smoothly to $S^3=SU(2)  \times SU(2)/\Delta SU(2)$, viewed as the zero section, to give a complete metric on $\R^4 \times S^3$, viewed as the spinor bundle of $S^3$.
Having now described the $G_2$-structures of interest, next we compute the instanton equations.

\section{The instanton ODEs}
In \cite{Lotay2016} Lotay-Oliveira showed that an $SU(2)^2$-invariant $G_2$-instanton on $\R^4 \times S^3$, endowed with the $G_2$-structures described in the previous section, is necessarily of the form
\begin{equation}
	A=f_1(r)\eta_{1}^++f_2(r)\eta_{2}^++f_3(r)\eta_{3}^+.\label{generalconnectionansatz}
\end{equation}
Motivated by this we shall restrict our investigation to connections of the latter form.
We begin by computing the fundamental system of ODEs defining $G_2$ and deformed $G_2$-instantons. 

\begin{Prop}
Consider a connection $1$-form $A$ of the form (\ref{generalconnectionansatz})
on $\R^4 \times S^3$ endowed with the Bryant-Salamon $G_2$-structure.
Then the $G_2$-instanton equation (\ref{g2instantonequ}) becomes equivalent to
\begin{equation}
	(r^4-c^3r)f'_i(r)-(2r^3+c^3)f_i(r)=0, \label{g2odecone}
\end{equation}
where $i=1,2,3$ and the deformed $G_2$-instanton equation (\ref{dG2instantonequ}) becomes equivalent to
\begin{align}
	(r^4-c^3r+\frac{27}{4}f_i(r)^2)f'_i(r)\ -&\label{dg2odecone}\\
	(2r^3+c^3-\frac{27}{4}(f_j(r)f'_j(r)+f_k(r)f'_k(r))) f_i(r)& = 0, \nonumber
\end{align}
where $(i,j,k)=(1,2,3)$ + cyclic permutation.
\end{Prop}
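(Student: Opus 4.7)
The approach is to follow step-by-step the proof of the preceding proposition for the cone, adapted to the Bryant-Salamon complete coefficient functions
$A_i = \frac{r}{3}\sqrt{1-r^{-3}}$, $B_i = \frac{r}{\sqrt{3}}$,
together with the change of variable $ds = (1-r^{-3})^{-1/2}\,dr$. Substituting the ansatz (\ref{generalconnectionansatz}) into $F_A = dA$ and using the Maurer-Cartan relations for $\eta_i^+$, one obtains
$F_A = f_i'(r)\,ds \wedge \eta_i^+ - f_i(r)\,\varepsilon_{ijk}(\eta_{jk}^{++}+\eta_{jk}^{--})$.

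Next I would expand $F_A \wedge *_\vp\vp$ using $*_\vp\vp = \tfrac12\omega\wedge\omega - ds \wedge \Omega^-$ with the explicit expressions (\ref{anstazform})-(\ref{anstazomminus}), and, for the deformed case, expand $\tfrac16 F_A^3$. The algebraic structure of the computation is identical to the one on the cone; only the numerical coefficients change. Specifically, products of the form $A_i^2 B_j^2$ now become $(r^4-r)/27$ rather than $r^4/27$, which is what replaces $r^4$ by $r^4-r$ in the coefficient of $f_i'$. The polynomial $2r^3+1$ emerges from the derivatives of the $A_i^2 B_j^2$-type products combined with the factor $(1-r^{-3})^{-1/2}$ coming from $ds$, and replaces every occurrence of $2r^3$ in the cone calculation. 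The cubic term $\tfrac16 F_A^3$ depends only on the $f_i$ and the Maurer-Cartan expressions for $d\eta_i^+$, and not on any metric coefficients, so it contributes $\tfrac{27}{4}f_i^2 f_i'$ and $\tfrac{27}{4}f_i(f_j f_j' + f_k f_k')$ exactly as in the conical case.

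Isolating the coefficients of three linearly independent basis 6-forms corresponding to the three cyclic choices of $(i,j,k) = (1,2,3)$ then yields the three ODEs of the statement.

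The main obstacle is entirely bookkeeping: tracking the numerous sign and prefactor contributions --- in particular the factors of $8$ and $4$ appearing in (\ref{anstazomplus})-(\ref{anstazomminus}) and the factor of $(1-r^{-3})^{\pm 1/2}$ --- and grouping them so that the final answer appears cleanly as the stated polynomials $r^4-r$ and $2r^3+1$. No new conceptual difficulty arises beyond the cone case.
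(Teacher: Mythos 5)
Your proposal is correct and coincides with what the paper does: the proposition is stated as the outcome of exactly this direct computation (substitute the ansatz, expand $F_A\wedge *_\varphi\varphi$ and $\tfrac16F_A^3$ in the coframe, and extract the coefficients of the independent $6$-forms), and your key structural observations --- that $A_i^2B_j^2=(r^4-r)/27$ replaces $r^4/27$, and that the cubic term $\tfrac16F_A^3$ involves no metric coefficients and hence contributes exactly as in the conical case --- are the right ones. Two small imprecisions that wash out if the bookkeeping is actually carried through: $F_A$ should read $f_i'(r)\,dr\wedge\eta_i^+ + \cdots$ with $dr=\sqrt{1-r^{-3}}\,ds$, and the coefficient $2r^3+1$ arises not from differentiating the $A_i^2B_j^2$ products (which would give $4r^3-1$) but from the $ds\wedge\Omega^-$ term of $*_\varphi\varphi$, i.e.\ from combinations such as $(A_1B_2^2-A_1A_2^2)(1-r^{-3})^{-1/2}=\tfrac{1}{27}(2r^3+1)$.
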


\begin{Prop}
Consider a connection $1$-form $A$ of the form (\ref{generalconnectionansatz})
on $\R^4 \times S^3$ endowed with the BGGG $G_2$-structure.
Then the $G_2$-instanton equation (\ref{g2instantonequ}) becomes equivalent to
\begin{gather}
	(16r^2-81)(16r^2-9)f'_1(r)-2304rf_1(r)=0,\label{g2instantonODE}\\
	r(16r^2-81)f'_i(r)-(4r+3)(4r^2-9r+\frac{27}{2})f_i(r)=0,
\end{gather}
where $i=2,3$ and the deformed $G_2$-instanton equation (\ref{dG2instantonequ}) becomes equivalent to
\begin{align}
	\big((16r^2-81)(16r^2-9)+576f_1(r)^2\big)f_1'(r)\ + &\label{dG2odegeneralBGGG}\\
	576(f_2(r)f_2'(r)+f_3(r)f_3'(r)-4r)f_1(r)& = 0,\nonumber
\end{align}
\begin{align}
	(16r^3-81r+18f_i(r)^2)f_i'(r)\ -&\\
	((4r+3)(4r^2-9r+\frac{27}{2})-18(f_1'(r)f_1(r)+f_j'(r)f_j(r)))f_i(r)& = 0,\nonumber
\end{align}
where $(i,j)=(2,3)$ + cyclic permutation.
\end{Prop}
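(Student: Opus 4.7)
The argument is entirely parallel to that for the two preceding propositions, specialized to the BGGG coefficient functions. First, differentiating the ansatz (\ref{generalconnectionansatz}) and using the Maurer-Cartan relations $d\eta^+_i=-\varepsilon_{ijk}(\eta^{++}_{jk}+\eta^{--}_{jk})$ from Section \ref{theg2manifoldsection} yields
\[ F_A=\sum_{i=1}^3 f'_i(r)\,dr\wedge\eta^+_i-\sum_{i=1}^3 f_i(r)\,\varepsilon_{ijk}\bigl(\eta^{++}_{jk}+\eta^{--}_{jk}\bigr).\]
Next, substituting the BGGG values of $A_1,A_2,B_1,B_2$ into (\ref{anstazform})--(\ref{anstazomminus}) together with the relation $ds/dr=\sqrt{(r-3/4)(r+3/4)}/((r-9/4)(r+9/4))$ gives explicit expressions for $\omega$, $\Omega^-$, and hence for $*_\vp\vp=\tfrac12\omega\wedge\omega-ds\wedge\Omega^-$.

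The heart of the computation is then to expand the $6$-form $F_A\wedge *_\vp\vp$ in the coframe $\{dr,\eta^\pm_i\}$. By the $SU(2)^2$-invariance of both factors together with the residual $\eta^\pm_2\leftrightarrow\eta^\pm_3$ symmetry of the BGGG structure, only three independent coefficients appear,
\[ F_A\wedge *_\vp\vp=P_1\,dr\wedge\eta^+_1\wedge\Theta_1+P_2\,dr\wedge\eta^+_2\wedge\Theta_2+P_3\,dr\wedge\eta^+_3\wedge\Theta_3,\]
where each $\Theta_i$ is the unique (up to scale) $5$-form on $S^3\times S^3$ of the complementary bi-degree in $\eta^+,\eta^-$, and the coefficients $P_2,P_3$ are interchanged by the $2\leftrightarrow 3$ swap. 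Setting $P_1=0$ and $P_2=0$ and clearing the common positive $ds/dr$ factor reproduces the two stated $G_2$-instanton ODEs.

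For the deformed case one additionally computes $F_A^3$. Writing $F_A=X+Y$ with horizontal part $X=\sum_i f'_i\,dr\wedge\eta^+_i$ and vertical part $Y=-\sum_i f_i\,\varepsilon_{ijk}(\eta^{++}_{jk}+\eta^{--}_{jk})$, one has $X\wedge X=0$ and hence $F_A^3=3X\wedge Y^2+Y^3$; only the first summand contributes terms of the form $Q_i\,dr\wedge\eta^+_i\wedge\Theta_i$, with $Q_i$ a cubic expression in the $f_j$ and $f'_j$. Adding $\tfrac16 Q_i$ to $P_i$ and equating to zero gives the stated deformed system. The main obstacle throughout is purely algebraic bookkeeping; the computation is kept tractable by the BGGG identity $A_2=B_2$, which produces substantial cancellation in $\omega\wedge\omega$ and $\Omega^-$, and by the $2\leftrightarrow 3$ symmetry, which reduces the three coefficient identities to two.
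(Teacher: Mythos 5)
The paper states this proposition without proof, as the outcome of a direct computation, and your outline follows exactly that route: compute $F_A=dA$ from the Maurer--Cartan relations, substitute the BGGG coefficients into $*_\vp\vp=\tfrac12\om\w\om-ds\w\Om^-$, and read off the coefficients of the invariant $6$-forms, with the decomposition $F_A^3=3X\w Y^2+Y^3$ (valid since $X\w X=0$) and the simplifications from $A_2=B_2$ and the $2\leftrightarrow 3$ symmetry all correctly identified. Two small bookkeeping points: your $\Theta_i$ must be a $4$-form, not a $5$-form, so that $dr\w\eta_i^+\w\Theta_i$ -- which is proportional to the $6$-form omitting $\eta_i^-$ -- has total degree $6$; and for the claimed \emph{equivalence} to exactly three ODEs you should also record that the component proportional to the volume form of $S^3\times S^3$ (the part with no $dr$ factor) vanishes identically, which holds because $Y^3=0$ and $Y\w\om\w\om=0$, any such product repeating a coframe direction. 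With those noted, your proposal coincides with the paper's (implicit) argument, up to actually carrying out the coefficient extraction that produces the stated polynomials.
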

In all the above cases the $G_2$-instanton ODEs are easily solved to give explicit solutions cf. \cite[Corollary 1]{Lotay2016}. While the $G_2$-instanton equation (\ref{g2instanton}) is a linear condition (since the gauge group is abelian), the deformed condition (\ref{dG2instantonequ}) is non-linear thereby making it much harder to solve in general. Furthermore, not all solutions to the above ODEs will lead to well-defined instantons on $\R^4 \times S^3$. The reason for this is that not all $1$-forms of the form (\ref{generalconnectionansatz}) are globally well-defined on $\R^4 \times S^3$. 
From \cite[Lemma 9]{Lotay2016} we require that the coefficient functions $f_i$ be even in the variable $s$ and that they vanish at the zero section $S^3$, i.e. at $t=0$, for the connection form $A$ to extend smoothly across the zero section. Using the latter as an initial condition leads to a singular initial value problem and as such standard methods from the theory of ODEs are not usually applicable. 
Thus, we have not been able to analyse the general case, but we have nonetheless been able to find certain explicit family of solutions which we describe next.

\section{Deformed $G_2$-instantons on the BGGG manifold}
We construct deformed $G_2$-instantons on $\R^4 \times S^3$ endowed with the BGGG  $G_2$-structure described in Section \ref{theg2manifoldsection}. We shall consider the simplest case when the coefficient functions $f_2,f_3$ are both zero so that
\begin{equation}
A=f(r) \eta_{1}^+.\label{ansatzconnection}
\end{equation}
Note that $A$ can be characterised as the unique $SU(2)^2\times U(1)$-invariant connection arising from the ansatz (\ref{generalconnectionansatz}). By Schur's lemma there is only one other $SU(2)^2\times U(1)$-invariant $1$-form on $\R^4 \times S^3$ namely $\eta_{1}^-$ but from the results in \cite{Lotay2016} we already know that this does not give rise to a smooth $G_2$-instanton. Observe that with the above ansatz we no longer have a system of ODEs for the deformed condition but just a single one, which is nonetheless still non-linear.
We begin by recalling the $G_2$-instanton arising from ansatz (\ref{ansatzconnection}).
\subsection{Example of a $G_2$-instanton}
With $A$ as above one easily solves (\ref{g2instantonODE}) to get 
\begin{equation}
A=c_0\frac{16 r^2-81}{16 r^2 -9}\eta_{1}^+,\label{g2instanton}
\end{equation}
where $c_0\in \R$. Observe that this corresponds to the harmonic $1$-form dual to the Killing vector field $T^+_1$ i.e. 
\[A = c_0 (T^+_1)^\flat = c_0 A_1^2 \eta_{1}^+.\] 
Indeed we have that for any Killing vector field $X$ on a $G_2$ manifold:
\[2d X^\flat \w *_\vp\vp=d( (X \ip \vp)\w \vp)=\mathcal{L}_{X}\vp \w \vp =0.\]
It is also straightforward to show that $X^\flat$ is always a harmonic $1$-form i.e. the connection form is in Coulomb gauge. 
As we shall see below there is rather close relation between the above solution and solutions to the deformed equation.
\subsection{Examples of deformed $G_2$-instantons}
We now consider the deformed $G_2$-instanton equation. Again with the above ansatz we find that (\ref{dG2odegeneralBGGG}) simplifies to
\begin{equation}
\big((16r^2-81)(16r^2-9)+576f(r)^2\big)f'(r)-2304rf(r)=0.\label{dG2ode}
\end{equation}
Comparing with (\ref{g2instantonODE}), we now see that the presence of the term involving $f(r)^2$ in (\ref{dG2ode}) makes the ODE non-linear. A general solution is implicitly given by
\begin{equation}
	24 \tan(f(r)/3+c) f(r) = 16r^2-81,\label{solution}
\end{equation}
where $c \in \R$. The reader can easily verify this by differentiating (\ref{solution}). 

Note that making the substitution $z=r-9/4$ in (\ref{dG2ode}) and looking for a Taylor series solution $p(z)=f(r)$ around $z=0$ shows that either $p(z)=0$ or
\[p(z)=a+\frac{9}{a}z+\frac{2a^2-81}{a^3}z^2-\frac{9(7a^2-162)}{a^5}z^3-\frac{22a^4-1944a^2+32805}{a^7}z^4+\cdots\]
where $a\in \R$. In particular, we see that such a solution cannot vanish at $z=0$ i.e. $r=9/4$, and hence from \cite[Lemma 9]{Lotay2016} we deduce that this does not give rise to a smooth connection $1$-form on $\R^4\times S^3$ since this $1$-form will have a singularity at the zero section $S^3$. Nonetheless, we shall see below that there are solutions defined by (\ref{solution}) which vanish at $r=9/4$.
\begin{Rem}
Note that if we require that $f(9/4)=0$ then (\ref{dG2ode}) corresponds to a singular initial value problem and as such one does not have uniqueness of solution for such an initial data. 
This is also the case for the $G_2$-instanton equation (\ref{g2instantonODE}) and indeed we see that for any $c_0\in\R$ in (\ref{g2instanton}) we get a solution which vanishes at $r=9/4$. This phenomenon occurs whenever one has a singular orbit in a cohomogeneity one manifold cf. \cite{Eschenburg2000}. Since the point $r=9/4$ is a \textit{regular} singular point for (\ref{g2instantonODE}), one can apply Malgrange's method \cite{Malgrange74} and still get local existence and uniqueness results, see \cite{Foscolo2018, Lotay2016} for such applications. However, the existence of the multi-valued solution (\ref{solution}) and above expression for the power series solution $p(z)$ suggest that the latter technique might not be applicable to study  the deformed instanton equations.
\end{Rem} 

Note that the unlike the (abelian) $G_2$-instanton equation, the deformed $G_2$-instanton equation is not invariant under scaling. However, the latter does admit a $\mathbb{Z}_2$-symmetry i.e. if $A$ is a deformed $G_2$-instanton then so is $-A$. Indeed we see that if $f_1(r)$ is solution to (\ref{solution}) with $c=c_1$ then so is $-f_1(r)$ but for $c=-c_1$. So henceforth, without loss of generality, we shall restrict to the case when $c \geq 0$.  

Due to the periodicity of the tangent function there are infinitely many branches of solutions to (\ref{solution}). This is readily seen from the fact that
\begin{equation}
	\tan(y/3+c)=\frac{C}{y}\label{plot}
\end{equation}
has infinitely many solutions for each $C \geq 0$, see Figure 1. 
\begin{figure}
	\centering
	\includegraphics[height=7cm]{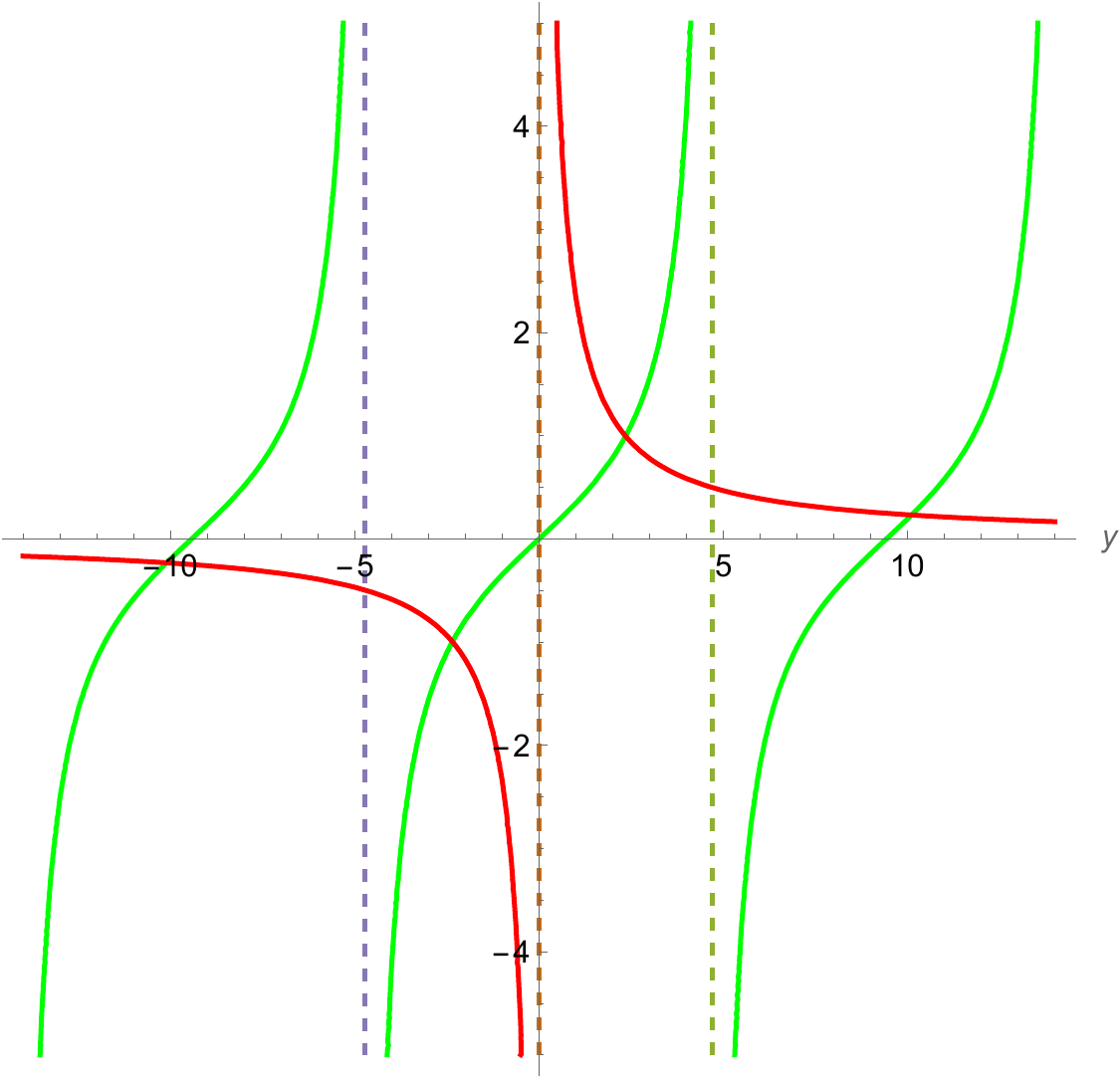}
	\caption{{Graphical solutions to (\ref{plot}) for $c=0$ and $C>0$.}}
\end{figure}
Note that since $f(r)$ is implicitly defined by (\ref{solution}) it is easier to study its behaviour by considering solutions to (\ref{plot}) as $c$ and $C$ vary. In particular, observe that all solutions to (\ref{solution}) are asymptotically constant. Moreover, for each $c\in[0,\pi/2)$ there exist a non-negative solution satisfying $f(\frac{9}{4})=0$ which is asymptotic to the line $f=\frac{3 \pi}{2}-3c$, see Figures 2 and 4, and hence this converges to the flat connection as $c\to \pi/2$. Indeed the latter is readily deduced from Figure 1 by observing that as $C \to \infty$ we have that $y \to \frac{3 \pi}{2}-3c$ and that as $C \to 0$ we have that $y \to 0$. Other branches with $f(r)$ not vanishing at $r=9/4$ are depicted in Figures 3 and 4 (these correspond to the aforementioned power series solution $p(z)$). Differentiating (\ref{solution}) and using that $f(9/4)=0$, one finds that 
\[f'(9/4)=3\cot(c).\]
This shows that for $c=0$ the solution is not smooth at $r=9/4$, as seen also in Figure 3. So we can summarise the above discussion into:
\begin{Th}\label{maintheorem}
In the BGGG case, there exists a $1$-parameter family of deformed $G_2$-instantons of the form $A_c=f(r)\eta_{1}^+$, where $f(r)$ is defined by (\ref{solution}) with $c\in (0,\pi/2)$. Moreover, as $c \to \pi/2$ the connection $A_c$ converges to the flat connection.
\end{Th}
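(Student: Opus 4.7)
The plan is to verify that the implicit relation (\ref{solution}) defines solutions to (\ref{dG2ode}), and to show that for each $c \in (0,\pi/2)$ the unique branch with $f(9/4)=0$ extends to a smooth connection on $\R^4 \times S^3$ which converges to the trivial connection as $c\to \pi/2$. For the first point I would differentiate (\ref{solution}) implicitly in $r$: after collecting the $f'(r)$ terms, substituting back $\tan(f/3+c)=(16r^2-81)/(24 f)$, and using $\sec^2=1+\tan^2$ to eliminate the trigonometric functions, one obtains (\ref{dG2ode}) after clearing the common factor $72f$.

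Next, I would establish the specific branch with $f(9/4)=0$ via the implicit function theorem. Setting $G(r,y) := 24\tan(y/3+c)\,y - (16r^2-81)$, I have $G(9/4,0)=0$ and $\partial_y G|_{(9/4,0)} = 24\tan(c)$, which is nonzero for $c\in(0,\pi/2)$. The theorem therefore provides a unique real-analytic branch $f(r)$ near $r=9/4$ with $f(9/4)=0$, and direct differentiation of (\ref{solution}) gives the explicit value $f'(9/4)=3\cot(c)$, finite by the assumption on $c$.

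To extend this branch globally and identify its asymptotic behaviour, I would rewrite (\ref{solution}) as $\tan(y/3+c)=C/y$ with $C(r)=(16r^2-81)/24$ and carry out the monotonicity analysis behind Figure 1. The function $C(r)$ increases monotonically from $0$ to $\infty$ on $[9/4,\infty)$; correspondingly, on the interval $y\in(0,\tfrac{3\pi}{2}-3c)$ the map $y\mapsto y\tan(y/3+c)$ is strictly increasing from $0$ to $\infty$, since both $\tan(y/3+c)$ and $y\sec^2(y/3+c)/3$ are positive there. Inverting gives a unique monotone branch $f(r)$ with $f(9/4)=0$ and $\lim_{r\to\infty}f(r)=\tfrac{3\pi}{2}-3c$. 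Along this branch $f/3+c$ remains in $(c,\pi/2)$, so $\partial_y G$ stays non-vanishing, keeping the implicit function theorem applicable and $f$ real-analytic throughout $[9/4,\infty)$.

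Finally, I would upgrade smoothness in $r$ to smoothness of the connection across the zero section and conclude. By \cite[Lemma 9]{Lotay2016}, the condition for $A=f(r)\eta_1^+$ to extend smoothly across the singular orbit $S^3=\{r=9/4\}$ is a parity/vanishing condition on $f$ in the arc-length parameter $s$; the real-analyticity of $f$ in $r-9/4$ established above, together with $f(9/4)=0$, gives the required expansion after the change of variables to $s$. For the limit statement, the branch satisfies $0\le f(r)\le \tfrac{3\pi}{2}-3c\to 0$ as $c\to\pi/2$, so $A_c\to 0$ uniformly on $\R^4\times S^3$, which is the flat connection. The step I expect to require the most care is the global branch-tracking in the third paragraph: one must exclude branch-jumping of $\tan$, i.e.\ verify that $f/3+c$ does not escape $(c,\pi/2)$ as $r$ increases, so that the asymptotic value $\tfrac{3\pi}{2}-3c$ is actually attained and the stated convergence to the flat connection holds.
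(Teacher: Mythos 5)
Your proposal is correct and follows essentially the same route as the paper: verifying the implicit solution (\ref{solution}) by differentiation, tracking the branch with $f(9/4)=0$ via the equation $\tan(y/3+c)=C/y$ to obtain the asymptote $f\to\frac{3\pi}{2}-3c$, checking $f'(9/4)=3\cot(c)$ is finite precisely for $c\in(0,\pi/2)$, and invoking \cite[Lemma 9]{Lotay2016} for smooth extension across the zero section. The paper argues the branch structure graphically where you use the implicit function theorem and an explicit monotonicity computation, but the substance is the same.
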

\begin{figure}
	\centering
	\includegraphics[height=7cm]{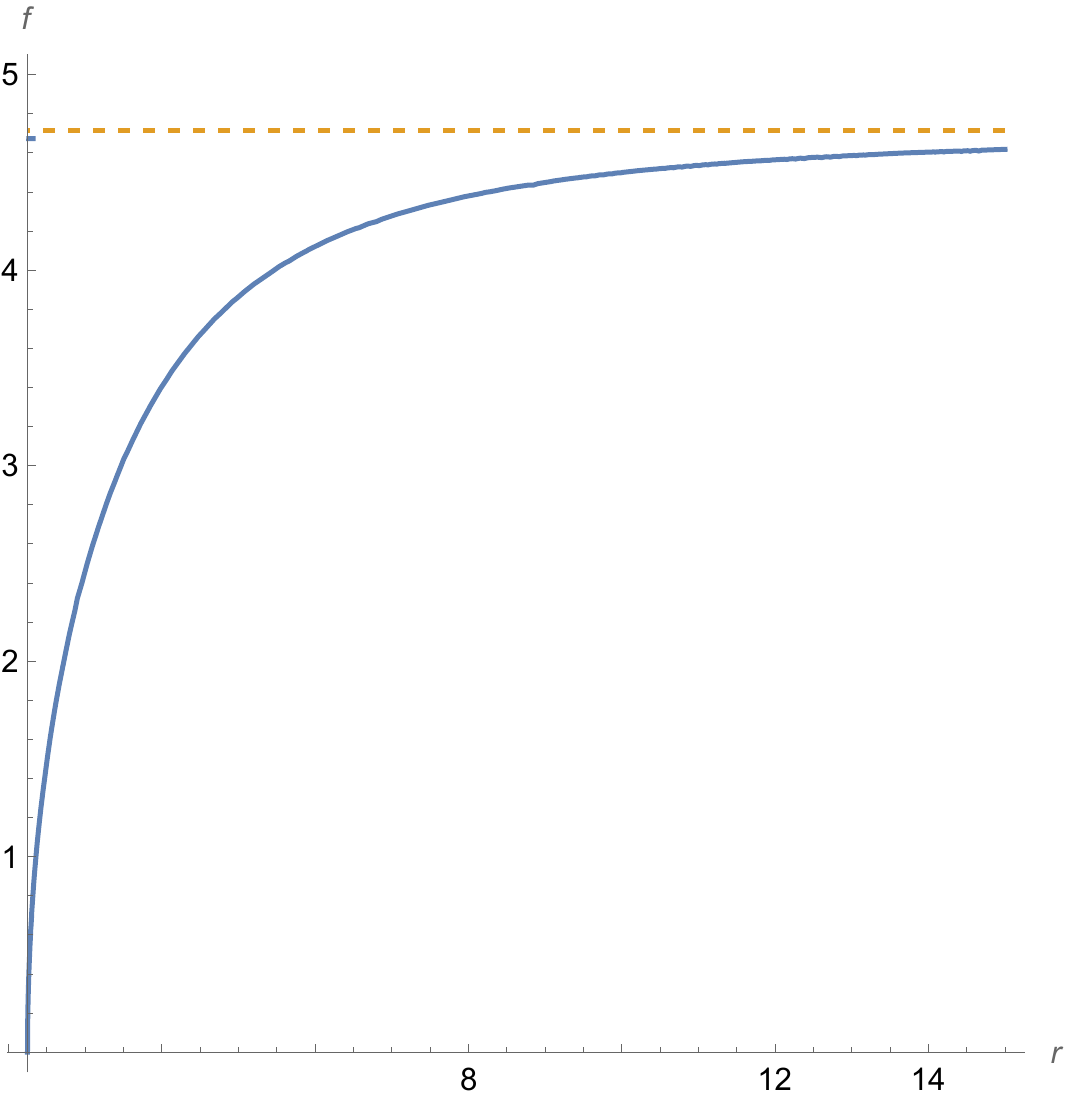}
	\caption{{A solution to (\ref{solution}) for $c=0$ asymptotic to $f=3\pi/2$.}}
\end{figure}
\begin{figure}
	\centering
	\includegraphics[height=7cm]{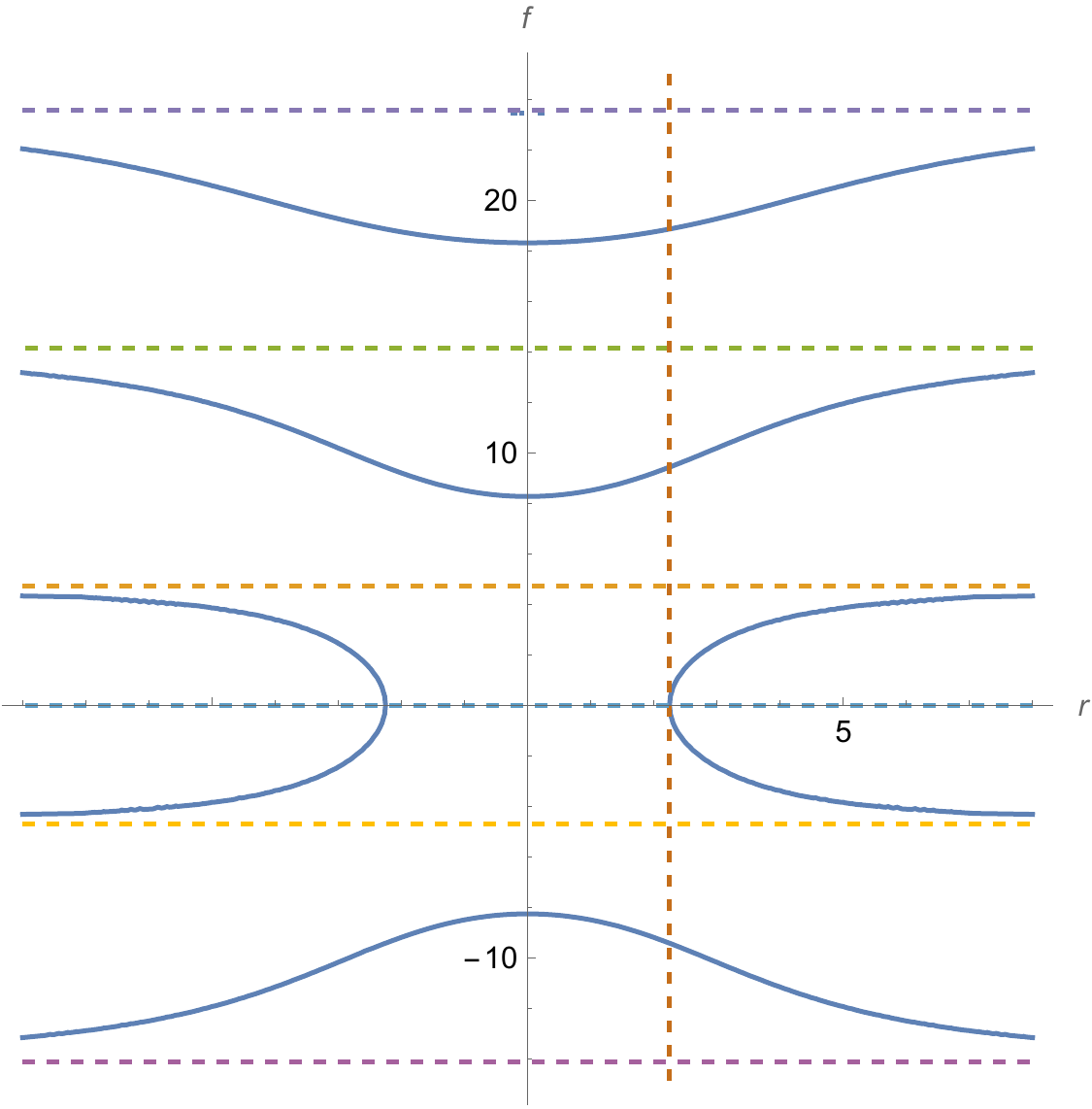}
	\caption{{Solutions to (\ref{solution}) for $c=0$ with vertical line denoting $r=9/4$.}}
\end{figure}
\begin{figure}
	\centering
	\includegraphics[height=7cm]{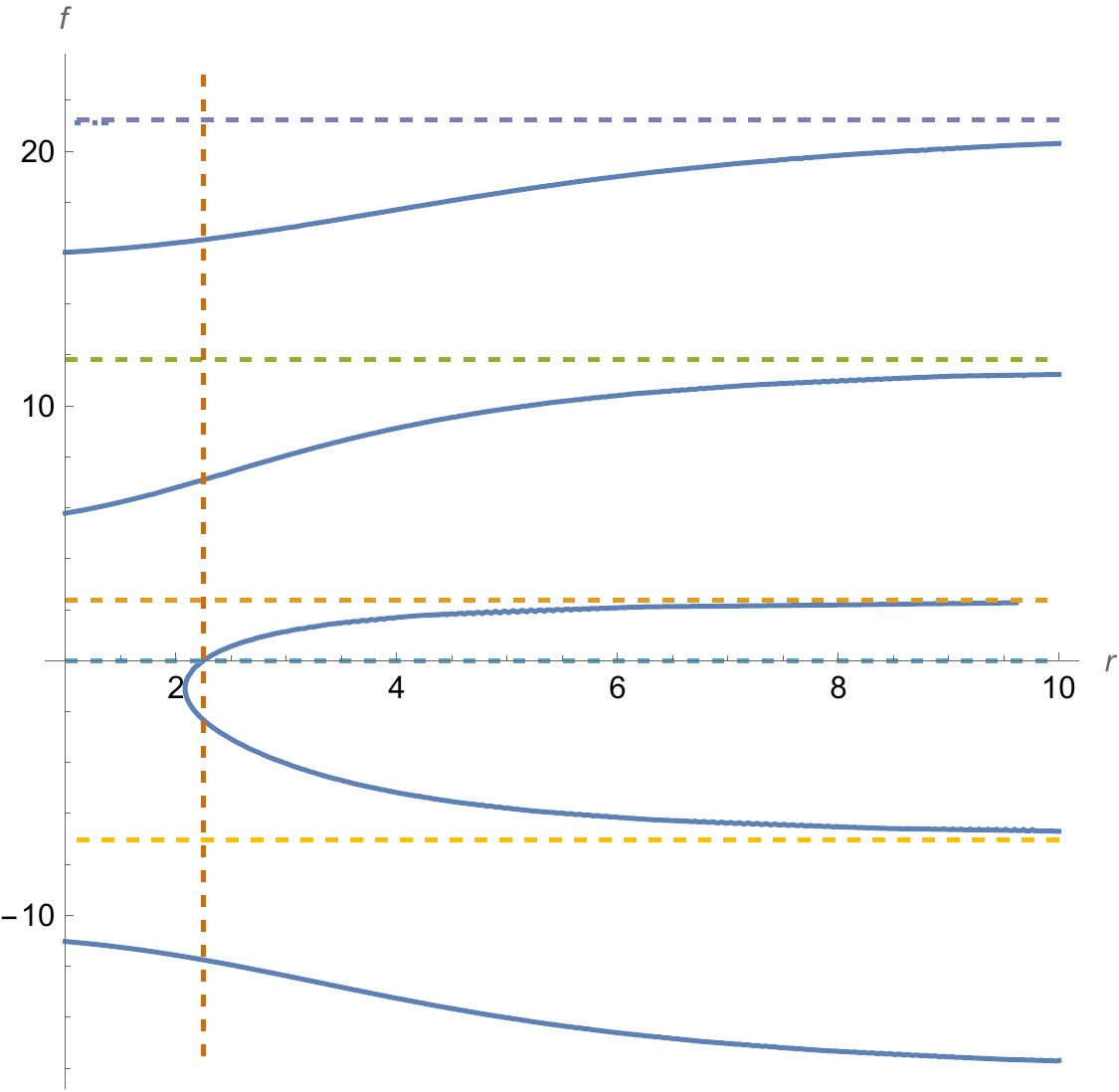}
	\caption{{Solutions to (\ref{solution}) for a $c\in(0,\pi/2)$ with vertical line denoting $r=9/4$.}}
\end{figure}
Recall that deformed $G_2$-instantons arise as critical points of a Chern-Simons type functional introduced  in \cite[(5.18)]{Karigiannis2009}, see also \cite[(5.5)]{Lotay2020}). The latter can be expressed as
\begin{equation}
	\mathcal{F}(\underline{A})=\frac{1}{2}\int_{M \times [0,1]} -F_{\underline{A}}^2 \w *_\vp\vp+\frac{1}{12} F_{\underline{A}}^4,\label{chernsimonsfunctional}
\end{equation}
where $\underline{A}:=A_0+s(A-A_0)$ is a connection $1$-form on $[0,1]_s\times M^7$ and $A_0$ is a fixed reference connection on $M^7$. A direct computation shows that $\mathcal{F}(\underline{A})=0$ for any $A$ of the form (\ref{ansatzconnection}) with $A_0=0$. Thus, we have that:
\begin{Prop}
$\mathcal{F}\equiv 0$ for the deformed $G_2$-instantons given in Theorem \ref{maintheorem}.
\end{Prop}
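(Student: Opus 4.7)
The plan is to reduce the eight-dimensional Chern--Simons integral to a pointwise identity on $M^7=\mathbb{R}^4\times S^3$. Since $A_0=0$ and the gauge group is abelian, the interpolating connection is simply $\underline{A}=tA$ and its curvature on $[0,1]\times M$ decomposes as $F_{\underline{A}}=dt\wedge A+tF_A$. Several terms are automatically killed by degree on $M^7$: $F_A^4$ and $F_A^2\wedge *_\vp\vp$ are $8$-forms on a $7$-manifold, and $(dt\wedge A)\wedge(dt\wedge A)=0$. Expanding the binomials therefore yields
\begin{align*}
F_{\underline{A}}^{\,2}\wedge *_\vp\vp &= 2t\,dt\wedge A\wedge F_A\wedge *_\vp\vp,\\
F_{\underline{A}}^{\,4} &= 4t^3\,dt\wedge A\wedge F_A^{\,3},
\end{align*}
so the integrand in (\ref{chernsimonsfunctional}) reduces to $dt\wedge\bigl(-t\,A\wedge F_A\wedge *_\vp\vp+\tfrac{t^3}{6}A\wedge F_A^{\,3}\bigr)$.

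The key step is the pointwise vanishing $A\wedge F_A^{\,3}\equiv 0$. From the Maurer--Cartan relations and $A=f(r)\eta_1^+$, the curvature is
\[
F_A= f'\,dr\wedge\eta_1^+ - f\,\eta_2^+\wedge\eta_3^+ - f\,\eta_2^-\wedge\eta_3^-,
\]
and each of the three summands squares to zero under $\wedge$. Hence $F_A^{\,3}=6PQR$ where $P,Q,R$ denote the three summands, giving a nonzero multiple of $dr\wedge\eta_1^+\wedge\eta_2^+\wedge\eta_3^+\wedge\eta_2^-\wedge\eta_3^-$. This $6$-form contains $\eta_1^+$, so wedging with $A=f\,\eta_1^+$ produces $\eta_1^+\wedge\eta_1^+=0$. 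Consequently $A\wedge F_A^{\,3}=0$ identically. Using the deformed $G_2$-instanton equation $F_A\wedge *_\vp\vp=\tfrac{1}{6}F_A^{\,3}$ then also gives $A\wedge F_A\wedge *_\vp\vp=\tfrac{1}{6}A\wedge F_A^{\,3}=0$. Therefore the whole integrand vanishes pointwise on $[0,1]\times M$ and $\mathcal{F}(\underline{A})=0$.

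There is no real obstacle here beyond keeping the binomial expansion bookkeeping and sign conventions straight; the statement is essentially a consequence of the fact that the ansatz (\ref{ansatzconnection}) uses a \emph{single} component $\eta_1^+$, so any $3$-fold wedge of $F_A$ can only ever contribute the unique $dr\wedge\eta_1^+$ factor one time, making $A\wedge F_A^{\,3}$ degenerate automatically. An alternative, if one preferred to avoid invoking the deformed instanton equation, would be to expand $*_\vp\vp=\tfrac{1}{2}\omega\wedge\omega - dr\wedge\Omega^-$ explicitly and verify $A\wedge F_A\wedge *_\vp\vp=0$ directly, but the argument above is cleaner.
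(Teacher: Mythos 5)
Your proof is correct and is essentially the ``direct computation'' the paper alludes to: reduce the integrand to $dt\wedge(-t\,A\wedge F_A\wedge *_\vp\vp+\tfrac{t^3}{6}A\wedge F_A^3)$ and observe that every surviving term carries $\eta_1^+$ twice. The only (immaterial) difference is that the paper asserts the vanishing for \emph{any} $A$ of the form (\ref{ansatzconnection}) without invoking the instanton equation, which is the direct check of $A\wedge F_A\wedge *_\vp\vp=0$ you mention as an alternative at the end.
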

In fact, for any deformed $G_2$-instanton of the form (\ref{generalconnectionansatz}) one has that $\mathcal{F}\equiv 0$. This is rather different from the results found in \cite{Lotay2020} on the nearly parallel, and more generally coclosed, $G_2$ manifolds. Observe that it is rather curious that for both the $G_2$ and deformed $G_2$-instantons the function $f(r)$ is asymptotically constant. 
This is suggestive that there might be a relation between these solutions which is what we investigate next.
\subsection{A limiting behaviour}
In the large volume limit it is known that the leading order term of the deformed Hermitian  Yang-Mills equation corresponds to the usual Hermitian Yang-Mills equation cf. \cite{CollinsXieYau2018}. So one might expect a similar behaviour to hold for the deformed $G_2$-instanton equation. We describe an analogous argument but rather than considering a large volume, we shall fix the $G_2$-structure (and hence $\vol_\vp$) and instead consider a small deformed $G_2$-instanton. More precisely,
\begin{Prop}\label{propconvergencetoflat}
Suppose that $A_n$ is a sequence of smooth deformed $G_2$-instantons on $(M^7,\vp,g_\vp)$ such that as $n \to \infty$, $c_n:=\sup |F_{A_n}| \to 0$ i.e. $A_n$ converges to a flat connection. Then $B_n:=c_{n}^{-1}A_{n}$ converges smoothly, modulo gauge transformation, to a $G_2$-instanton $B_\infty$.
\end{Prop}
\begin{proof}
Since $A_n$ is a deformed $G_2$-instanton, it follows that $B_n$ satisfies
\begin{equation}
	\frac{1}{6}c_n^2 F_{B_n}^3-F_{B_n} \w *_\vp\vp=0.
\end{equation}
By definition we also have that $\sup |F_{B_n}| =1$ and hence as $n\to \infty$ we see that $c_n^2 \sup|F_{B_n}^3| \to 0$. From Uhlenbeck's results in \cite{Uhlenbeck1982} we can extract a weak limit $B_\infty$. Since $G_2$-instantons are Yang-Mills connections, it follows that the limit is also smooth cf. \cite{Tian2000}. 
\end{proof}
Note that if $M$ is compact then we can replace the sup norm by the $L^2$-norm and the above argument still holds since $L^3 \hookrightarrow L^2$. Thus, again by Uhlenbeck's results we can
extract a weak limit $B_\infty$ with finite energy. However, obtaining regularity in this case will require analogues of Tian's results in \cite{Tian2000} and one expects smooth convergence to occur outside of a blow-up set corresponding to an associative cf. \cite[Theorem 4.2.3]{Tian2000}. 

We now give a concrete illustration of Proposition \ref{propconvergencetoflat}. Recall that in Theorem \ref{maintheorem} we saw that as $c \to \pi/2$ our solution converges to a flat connection, so as a consequence of the above we show that:
\begin{Cor}
Let $A_c$ denote the deformed $G_2$-instanton as in Theorem \ref{maintheorem} then setting $c=\tan^{-1}(\epsilon^{-1})$ we have that $\epsilon^{-1} A_c$ converges to the $G_2$-instanton given by (\ref{g2instanton}) as $\epsilon \to 0$. 
\end{Cor}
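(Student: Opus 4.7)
The plan is to exploit the explicit implicit solution (\ref{solution}) and analyse the limit $c = \tan^{-1}(\epsilon^{-1}) \to \pi/2$ as $\epsilon \to 0$. By Theorem \ref{maintheorem}, $f_c \to 0$ pointwise in this limit, so I would rescale by setting $g_\epsilon(r) := \epsilon^{-1} f_c(r)$ and seek a nontrivial limit $g$. The corollary is then equivalent to showing $g_\epsilon \to 3(16r^2-81)/(16r^2-9)$, i.e.\ (\ref{g2instanton}) with $c_0 = 3$.

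The cleanest way is to rescale the ODE (\ref{dG2ode}) itself. Substituting $f = \epsilon g$ and dividing by $\epsilon$ yields
\begin{equation*}
\bigl((16r^2-81)(16r^2-9) + 576\,\epsilon^2 g^2\bigr)\, g' = 2304\, r\, g,
\end{equation*}
which is precisely the $G_2$-instanton ODE (\ref{g2instantonODE}) perturbed by an $O(\epsilon^2)$ term. Implicit differentiation of (\ref{solution}) at $r=9/4$ combined with $f_c(9/4)=0$ gives $f_c'(9/4) = 3\cot(c) = 3\epsilon$, so $g_\epsilon(9/4) = 0$ and $g_\epsilon'(9/4) = 3$. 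Continuous dependence on the parameter $\epsilon^2$ then gives convergence of $g_\epsilon$ on compact subsets of $(9/4, \infty)$ to the solution of the limiting linear ODE with data $g(9/4)=0$, $g'(9/4)=3$, which by an easy computation using (\ref{g2instanton}) is exactly $3(16r^2-81)/(16r^2-9)$.

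As a hands-on cross-check, the tangent addition formula with $\tan(c) = \epsilon^{-1}$ gives
\begin{equation*}
\tan(f/3 + c) = \frac{1 + \epsilon \tan(f/3)}{\epsilon - \tan(f/3)}.
\end{equation*}
Inserting this into (\ref{solution}), writing $f = \epsilon g$, and using $\tan(\epsilon g/3) = \epsilon g/3 + O(\epsilon^3)$, one multiplies out to obtain an algebraic identity whose leading order as $\epsilon \to 0$ is $24 g/(1 - g/3) = 16r^2 - 81$. Solving immediately yields $g = 3(16r^2-81)/(16r^2-9)$, in agreement with the ODE-based argument.

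The main subtlety is that $r = 9/4$ is a singular point of both the deformed and the undeformed ODEs, and as the Remark emphasises, uniqueness of solutions given only $f(9/4)=0$ can fail. The value $g_\epsilon'(9/4)=3$ extracted from the implicit formula is precisely what selects the correct branch of the $G_2$-instanton and pins down the constant $c_0 = 3$ in (\ref{g2instanton}); this is the step where care is needed, but once the first-order data is matched the convergence on $(9/4,\infty)$ reduces to standard continuity of ODE solutions in a parameter.
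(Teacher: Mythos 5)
Your conclusion is correct, and your ``hands-on cross-check'' is in substance the paper's own proof: the author substitutes the rescaled function into (\ref{solution}), expands $\tan(\epsilon f/3+c)$ (via the sine/cosine sum identities rather than your tangent addition formula), linearises $\sin(\epsilon f/3)\approx \epsilon f/3$ and $\cos(\epsilon f/3)\approx 1$ using the fact that $\epsilon f=f_c\to 0$, and reads off the limit after setting $\tan(c)=\epsilon^{-1}$ --- precisely your leading-order identity $24g/(1-g/3)=16r^2-81$, giving $g=3(16r^2-81)/(16r^2-9)$, i.e.\ (\ref{g2instanton}) with $c_0=3$. Your primary route --- viewing the rescaled ODE as an $O(\epsilon^2)$ perturbation of (\ref{g2instantonODE}) and invoking continuous dependence from the data $g_\epsilon(9/4)=0$, $g_\epsilon'(9/4)=3$ --- is genuinely different and does correctly select the constant $c_0=3$, but as stated it leans on exactly the point the paper's Remark warns about: $r=9/4$ is a singular point of both ODEs (the coefficient of $f'$ vanishes there), solutions with $f(9/4)=0$ are not unique, and the standard continuous-dependence theorem does not apply at that point. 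One would have to either work on $[9/4+\delta,\infty)$ with initial values already known to converge (which is information the implicit formula, not the ODE, supplies) or justify a regular-singular-point analysis that the author explicitly doubts is available for the deformed equation. So the algebraic expansion of (\ref{solution}) should be regarded as the actual proof, with the ODE-perturbation picture serving as motivation; it does, however, have the merit of making transparent why the limit is the particular member $c_0=3$ of the family (\ref{g2instanton}).
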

\begin{proof}
Let $\epsilon$ denote a non-negative constant and set $B=\epsilon^{-1}A_c$. Then $B$ satisfies  
\begin{equation}
	\frac{1}{6}\epsilon^2 F_B^3-F_B \w *_\vp\vp =0.\label{dG2instantonequmod}
\end{equation}
Writing $B=f(r)\eta_{1}^+$, 
the general solution is given by
\begin{equation}
	24\epsilon f(r) \tan(\epsilon f(r)/3+c)  = 16r^2-81.
\end{equation}
This is of course equivalent to (\ref{solution}).
Using the trigonometric sum identities we can express the above as
\begin{equation*}
	\tan(c) \big(24 \epsilon f \cos(\epsilon f/3) +(16r^2-81)\sin(\epsilon f/3)\big)=\big((16r^2-81)\cos(\epsilon f/3)-24 \epsilon f \sin(\epsilon f/3)\big) .
\end{equation*}
From our previous results we know that $\epsilon f(r) \to 0$ as $c \to \pi/2$, and hence we have that $\sin(\epsilon f/3) \approx \epsilon f/3$ and $\cos(\epsilon f/3) \approx 1$. Thus, the above becomes
\[\frac{\epsilon}{3}\tan(c) f(r) \approx \frac{16r^2-81}{16 r^2-9}-\frac{8 (\epsilon f)^2}{16r^2-9}.\]
Setting $c=\tan^{-1}(\epsilon^{-1})$ and taking $\epsilon\to 0$ then gives the solution (\ref{g2instanton}). This concludes the proof.
\end{proof}
Since deformed $G_2$-instantons are expected to be mirrors to associative cycles, it seems natural to ask if the above convergent sequence has an analogue interpretation in terms of convergence of associatives in the mirror. 
\section{Deformed $G_2$-instantons on the Bryant-Salamon cone}

We shall now construct instantons on $\R^+ \times S^3 \times S^3$ endowed with the Bryant-Salamon conical $G_2$-structure. We consider a connection of the form 
\begin{equation}
	A=f(r)(a_1\eta_{1}^++a_2\eta_{2}^++a_3\eta_{3}^+),\label{connectionansatzBScase}
\end{equation}
where $a_i$ are constants. From (\ref{g2odecone}), with $c=0$, we have that the general solution to the $G_2$-instanton equation is given by 
\begin{equation}
	f(r)=c_0 r^2.\label{solutiong2cone}
\end{equation}
Again we see that the resulting connection $1$-form corresponds to the dual of the Killing vector fields $T_i^+$. On the other hand from (\ref{dg2odecone}) we see that the deformed equation becomes
\begin{equation}
	f'(r)(r^4+\frac{27}{4}(a_1^2+a_2^2+a_3^2)f(r)^2)-2f(r)r^3=0.\label{dg2odeconeansatz}
\end{equation}
A $1$-parameter family of solutions to (\ref{dg2odeconeansatz}) is implicitly given by
\begin{equation}
	\log(c f(r))f(r)^2=\frac{2 r^4}{27(a_1^2+a_2^2+a_3^2)},\label{solutiondg2cone}
\end{equation}
or equivalently, one can express the above solution as
\begin{equation}
	f(r)=\frac{1}{c}\exp\Big(\frac{1}{2}W\Big(\frac{4c^2r^4}{27(a_1^2+a_2^2+a_3^2)}\Big)\Big),\label{functionlog}
\end{equation}
where $W$ denotes the Lambert $W$ function i.e. the inverse of $h(x)=xe^{x}$. As in the previous section we can restrict to the case when $c>0$, so we can summarise the above into:
\begin{Th}\label{maintheoremtwo}
On the Bryant-Salamon cone, there exist deformed $G_2$-instantons of the form (\ref{connectionansatzBScase}), where $f(r)$ is defined by (\ref{functionlog}) with $c \in \R^+$ and $a_i \in \R$.
\end{Th}
From (\ref{functionlog}) we see that $f(r)$ is strictly increasing. Since $\log(x)$ decays slower than $x^\epsilon$ for any $\epsilon>0$, from (\ref{solutiondg2cone}) one has that the asymptotic behaviour of $f(r)$ is sub-quadratic i.e. $r^{2-\epsilon}$. Hence as in the previous section we see that both the $G_2$ and deformed $G_2$-instantons have rather similar asymptotic behaviour. 
It is likely that similar smooth solutions exist for the complete Bryant-Salamon $G_2$-structure but we have not been able to find explicitly such solutions.

\section{Evolution equations on hypersurfaces}

We now describe a more general framework to construct deformed $G_2$-instantons via evolution equations. 
First we recall the results of Bryant and Hitchin in \cite[Theorem 4]{BryantEmbedding} and \cite[Theorem 8]{Hitchin01stableforms} that assert that if a $6$-manifold $N$ admits a real analytic $SU(3)$-structure determined by the pair $(\om_0,\Om^+_0)$ satisfying
\begin{gather}
	d\om_0 \w \om_0 =0,\label{halfflatone}\\
	d\Om^+_0=0,\label{halfflattwo}
\end{gather}
then there exists a solution to the Hitchin's flow:
\begin{gather}
\frac{\partial \Om^+}{\partial t}=d\om,\\
\frac{\partial \om^2}{\partial t}=-2d\Om^-,	
\end{gather}
with $\om(0)=\om_0$ and $\Om^+(0)=\Om^+_0$, giving rise to a torsion free $G_2$-structure on $M=[0,\varepsilon) \times N$ by (\ref{the3form}). An $SU(3)$-structure satisfying (\ref{halfflatone}), (\ref{halfflattwo}) is called half-flat since exactly half of the $SU(3)$ intrinsic torsion equations vanishes \cite{ChiossiSalamonIntrinsicTorsion} and in \cite{Hitchin01stableforms} Hitchin showed that this condition is indeed preserved under the flow. The above set up has proven to be incredibly useful in constructing cohomogeneity one examples of $G_2$ metrics and $G_2$-instantons starting from half-flat $SU(3)$-structures on homogeneous spaces cf. \cite{Foscolo2018, Hitchin01stableforms, Lotay2016}. Thus, we are naturally led to consider the analogous problem for deformed $G_2$-instantons.

So consider a connection on $M=I_t \times N$ of the form 
\begin{equation}
A=a(t),\label{ansatzconnectionproduct}
\end{equation}
where $a(t)$ denotes a family of connections on $N$.
The curvature form is then given by
\begin{equation}
	F_A=dt\w a'+F_a,
\end{equation}
where $F_a=d_Na$ with $d_N$ denoting the exterior derivative on $N$. A simple computation now shows that
\begin{Prop}\label{reducedproposition}
The connection form $A$ given by (\ref{ansatzconnectionproduct}) defines a deformed $G_2$-instanton on $M$ if and only if the connection form $a(t)$ on $N$ solves
\begin{gather}
\frac{1}{2}a'\w (F_a^2-\om^2)+F_a\w \Om^-=0,\label{reduceddeformG2two}
\end{gather}
and is subject to the constraint
\begin{equation}
\frac{1}{6}	F_a^3 - \frac{1}{2} F_a \w \om^2=0.\label{reduceddeformG2one}
\end{equation}
Moreover, condition (\ref{reduceddeformG2one}) is preserved under the evolution equation  (\ref{reduceddeformG2two}) i.e. if it is holds at some $t_0\in I_t$ then it holds for all $t\in I_t$.
\end{Prop}
If $N$ is a Calabi-Yau $3$-fold then from Hitchin's flow $M=\R \times N$ inherits the natural product structure. If we also assume that $a(t)$ is $t$-invariant so that $A$ is just a connection on $N$ then the pair (\ref{reduceddeformG2two}), (\ref{reduceddeformG2one}) reduces to the usual deformed Hermitian Yang-Mills equations cf. \cite[Lemma 3.2]{Lotay2020}. 

If $N$ is $S^3 \times S^3$ endowed with its homogeneous nearly K\"ahler $SU(3)$-structure then $M=\R^+\times S^3 \times S^3$ corresponds to the Bryant-Salamon $G_2$ cone described in Section \ref{theg2manifoldsection}, see \cite[Sect. 7.2]{Hitchin01stableforms}. 
More generally, one can consider the case when $N=G/H$ is a homogeneous space with a $G$-invariant half-flat $SU(3)$-structure and $A$ is also a $G$-invariant connection; this is the general set up for constructing invariant connections on cohomogeneity one $G_2$ manifolds. Deformed $G_2$-instantons on $M$ are then described by certain curves in the finite dimensional space of closed $G$-invariant $2$-forms on $N$ and these curves are constrained to lie in the codimension one subvariety determined by (\ref{reduceddeformG2one}). In particular, the examples we described in the previous sections arise in this way. \\

\textbf{An example on the Iwasawa manifold.} 
There are by now a large class of explicitly known examples of half-flat $SU(3)$-structures on homogeneous spaces, see for instance \cite{Conti11} for the classification on nilmanifolds. We illustrate Proposition \ref{reducedproposition} by taking $N$ to be the Iwasawa manifold. $N$ can be viewed as a nilmanifold with a left-invariant coframing $e^i$ satisfying $de^i=0$ for $i=1,2,3,4$ and
\[de^5= \om_1,\ \ \ \  de^6 = -\om_2,\]
where $\om_1=e^{12}+e^{34}$, $\om_2=e^{13}-e^{24}$ and $\om_3=e^{14}+e^{23}$. Observe that the latter structure equations exhibit $N$ as a $\mathbb{T}^2$ bundle over $\mathbb{T}^4$. 
A solution to Hitchin's flow on $N$ is then furnished by
\begin{gather*}
	\om=(3t)^{4/3}\om_3+(3t)^{-2/3}e^{56},\\
	\Om^+=(3t)^{1/3} e^5 \w \om_2 + (3t)^{1/3} e^6 \w \om_1,\\
	\Om^-=(3t)^{1/3} e^6 \w \om_2 - (3t)^{1/3} e^5 \w \om_1.
\end{gather*}
This yields the $G_2$ metric 
\begin{equation}
	g_{\vp}=dt^2 + (3t)^{2/3}((e^1)^2+(e^2)^2+(e^3)^2+(e^4)^2) +(3t)^{-2/3}((e^5)^2+(e^6)^2)\label{apostolovsalamonmetric}
\end{equation}
on $M=\R^+ \times N$ which is incomplete as $t\to 0$ but complete as $t\to \infty$ cf. \cite{ChiossiSalamonIntrinsicTorsion}. 
A general left invariant connection form on $M$ is given by
\begin{equation}
	A=\sum_{i=1}^6f_i(t)e^i.
\end{equation}
It is easy to see that (\ref{reduceddeformG2one}) is automatically satisfied. Computing the $G_2$-instanton equation we find that $f_i'(t)=0$ for $i=1,2,3,4$ and
\begin{equation}
	3 t f'_i(t)+2f_i(t)=0,\label{nilodeg2instanton}
\end{equation}
for $i=5,6$. So without loss of generality we can set $f_i(t)=0$ for $i=1,2,3,4$ and solving (\ref{nilodeg2instanton}) we get a family of $G_2$-instantons
\begin{equation}
A(t)=(3t)^{-2/3}(c_5 e^5+c_6 e^6),
\end{equation}
where $c_5,c_6$ are constants. From (\ref{apostolovsalamonmetric}) it easy to see that the latter corresponds to linear combinations of the $1$-forms dual to the Killing vector fields $e_5$ and $e_6$. 

On the other hand, (\ref{reduceddeformG2two}) becomes equivalent to $f_i'(t)=0$ for $i=1,2,3,4$ as before, but now we get a non-linear system of ODEs: 
\begin{equation}
3 t^{4/3} f_i'(t)+2t^{1/3}f_i(t)-3^{-1/3}(f_i(t)^2 f_i'(t)+ f_j(t)^2 f_i'(t))=0,
\end{equation}
where $(i,j)=(5,6), (6,5)$. A family of deformed $G_2$-instantons is then given by 
\begin{equation}
	A(t)={\sqrt{2}\ (3t)^{2/3}}\Big(\frac{c_5 e^5+c_6 e^6}{c_5^2+c_6^2}\Big).
\end{equation}
By contrast to our previous examples we now see that the asymptotic behaviour of the $G_2$-instantons and deformed $G_2$-instantons are drastically different. A rather similar behaviour is also exhibited by the metric (\ref{apostolovsalamonmetric}) since while the size of  $\mathbb{T}^2$ fibres decays as $t \to \infty$, the size of the base $\mathbb{T}^4$ grows.  
 
Since there are plenty of explicit examples of local $G_2$ metrics arising from nilmanifolds, we expect that a similar construction as above will yield many local examples of deformed $G_2$-instantons. 
Constructing examples on complete $G_2$ manifolds on the other hand appear to be a much harder problem.

Since there is an intricate link between instantons and calibrated submanifolds cf. \cite{Chen2021, LeeLeung2009, Tian2000}, next we describe some new examples in our context.

\section{A remark on associative fibration}\label{calibratedsubmanifolds}

Given a $G_2$ manifold $M$ admitting an associative semi-flat $\mathbb{T}^3$ fibration, in \cite{LeeLeung2009} Lee-Leung argue that the Fourier-Mukai transform should interchange the associative geometry of $M$ with the coassociative geometry of the moduli space of associative cycles on $M$. So this naturally motivates us to search for associative fibrations in our set up.
While on smooth compact manifolds this is not possible since the moduli space of associatives has expected dimension 0, in the non-compact setting there are no such restrictions. Indeed the $G_2$-structures constructed by Bryant-Salamon on the anti-self-dual bundles of $S^4$ and $\C\mathbb{P}^2$ \cite{Bryant1989} are fibred by associative $\R^3$. 

For the $G_2$-structures described in Section \ref{theg2manifoldsection} it is easy to see that the $\R^4$ fibres of the spinor bundle of $S^3$ define a coassociative fibration and that the zero section $S^3$ corresponds to an associative submanifold, see \cite[Sect. 4]{KarigiannisLotay2021} for a detailed study in the Bryant-Salamon case. To the best of our knowledge, there seems to be no reference in the literature pointing out that there is also an associative foliation of $\R^4\times S^3$. The tangent space of the latter leaves are determined by the vector fields $\langle T^+_1,T^-_1,\partial_r \rangle$. Indeed this distribution is involutive since 
$$[T^+_1,T^-_1]=[T^+_1,\partial_r]=[T^-_1,\partial_r]=0$$
and from (\ref{anstazform}) and (\ref{the3form}) we see that the leaves are calibrated by $\vp$. 
It is also not hard to see that these associatives are all diffeomorphic to $\R^2 \times S^1$. However, the latter does not define a global fibration since an $S^2$ family of the leaves will intersect in the zero section $S^3$ in an $S^1$.  
Since this foliation is parametrised by $S^2 \times S^2$, on the complement of the zero section  we get the associative fibration $$\R^+\times \mathbb{T}^2 \hookrightarrow (\R^4\setminus 0) \times S^3 \to S^2 \times S^2.$$ 
From the description in Section \ref{theg2manifoldsection}, it is easy to see that the fibre metrics in the Bryant-Salamon and BGGG cases are respectively given by:
\begin{gather}
g=\frac{1}{1-c^3r^{-3}}dr^2+\frac{r^2(1-c^3r^{-3})}{9}d\theta_1^2+\frac{r^2}{3}d\theta_2^2,\\
g=\frac{(r-3/4)(r+3/4)}{(r-9/4)(r+9/4)}dr^2+\frac{(r-9/4)(r+9/4)}{(r-3/4)(r+3/4)}d\theta_1^2+\frac{4r^2}{9}d\theta_2^2,
\end{gather}
where $\theta_i$ denote a local coordinate on the $S^1$ factors. In the Bryant-Salamon case observe that the metrics are asymptotically conical; these can be viewed as smoothings of the conical examples found by Lotay on $\R^7$ in \cite{Lotay2006}. On the other hand, in the BGGG case the metrics are asymptotic to a flat product metric on $\R^2 \times S^1$. It is rather interesting that the geometry of the associatives encapsulates the behaviour of the respective $G_2$ metrics.


\bibliography{biblioG}
\bibliographystyle{plain}

\end{document}